\newtheorem{Thm}{Theorem}
\newtheorem{Def}[Thm]{Definition}
\newtheorem{Lemma}[Thm]{Lemma}
\newtheorem{Cor}[Thm]{Corollary}
\newtheorem{Prop}[Thm]{Proposition}
\newcommand{\numberset}{\mathbb}
\newcommand{\N}{\numberset{N}}
\newcommand{\p}{\mathbf{P}}
\title{Randomized algorithms to generate hypergraphs with given degree sequences}
\date{}
\author[1]{Michela Ascolese\thanks{Corresponding author: \texttt{michela.ascolese@unifi.it}}}
\author[2]{Matthias Lienau}
\author[2]{Matthias Schulte}
\author[2]{Anusch Taraz}
\affil[1]{\small{Department of Mathematics and Computer Science, University of Florence}}
\affil[2]{Institute of Mathematics, Hamburg University of Technology}
\begin{document}

\maketitle

\begin{abstract}
    The question whether there exists a hypergraph whose degrees are equal to a given sequence of integers is a well-known reconstruction problem in graph theory, which  is motivated by discrete tomography. In this paper we approach the problem by randomized algorithms which generate the required hypergraph with positive probability if the sequence satisfies certain constraints. 
\end{abstract}

\section{Introduction and Results}
One of the central topics in discrete tomography is the 
reconstruction of a discrete object based on partial knowledge, such as its horizontal and vertical projections, see e.g.~\cite{KubaHerman,KubaHerman2}. 
This task can be rephrased in the context of graph theory as the problem of reconstructing a hypergraph starting from some information concerning its structure, for example about its uniformity and degree sequence. 
In contrast to graphs, this question is NP-hard for hypergraphs. In this paper, we analyse randomized algorithms that find a solution in certain situations.

\paragraph{Notation.}

We briefly introduce some notation that is needed to formulate the basic questions, related results and our contributions. We shall slightly deviate from the standard graph theoretic notions, by first allowing for the possibility that edges contain multiple copies of the same vertex and that the edge set contains multiple copies of identical edges. 
More precisely, a $k$-hypergraph is a pair $H=(V,E)$ where $V=[n]\coloneqq\{1,\dots,n\}$ denotes the set of vertices and $E=\{e_{1},\dots,e_{m}\}$ the multi-set of edges. Here every edge $e_i$ is a multi-set of vertices of cardinality $|e_{i}|=k$. We define $|e|_{i}$ to be the number of occurrences of the vertex $i$ in the edge $e$. The degree of a vertex, $\deg(i)=\sum_{j=1}^{m}|e_{j}|_{i}$, is the number of edges containing $i$ (counted with its multiplicity).

An edge $e$ is called a \emph{loop} if there exists $i\in [n]$ such that $|e|_{i}>1$. 
Two edges $e_{i},e_{j}\in E$ with $e_{i}=e_{j}$ and $i\neq j$ are called \emph{parallel edges}. 
The $k$-hypergraph $H$ is called \emph{simple} if it does not contain loops or parallel edges.

We consider integer sequences
$\pi=(d_{1},\dots,d_{n})$ of length $n$ such that $d_{1}\geq d_{2}\geq\hdots\geq d_{n}\geq 1$, and we define $\sigma=\sum_{i=1}^{n} d_{i}$. 
Such a sequence $\pi$ is called \emph{k-graphic} if there exists a simple $k$-hypergraph $H$ with $d_{i}=\deg(i)$ for all $i$. In this case we say that $H$ has $\pi$ as degree sequence. 

Clearly, for a sequence $\pi$ to be $k$-graphic we must have that $d_1\leq\sigma/k$ and that $k$ divides $\sigma$. Hence we shall always assume from now on that these properties hold for any integer sequence we consider.

The following two tasks are well-known problems in graph theory that are motivated by questions in discrete tomography. Given a number $k\in\N$ and a sequence $\pi=(d_{1},\dots,d_{n})$, 
\begin{itemize}
	\item decide whether $\pi$ is $k$-graphic (decision problem), 
	\item find a simple $k$-hypergraph $H$ that has $\pi$ as degree sequence (reconstruction problem).
\end{itemize}

\paragraph{History.} 
Just as with Satisfiability and Colorability, the borderline between tractability and non-tractability runs between $k=2$ and $k\ge 3$. 
In the case of graphs, both the decision and reconstruction problem can be solved in polynomial time. A non-recursive characterization of graphic degree sequences was given by Erd\H{o}s and Gallai in~\cite{erdos}. Later, many equivalent conditions were provided (see~\cite{7criteria}). Moreover, Hakimi~\cite{hakimi} and Havel~\cite{havel} showed that an intuitive greedy algorithm solves the reconstruction problem in polynomial time. 

Moving to hypergraphs, in 1975 Dewdney~\cite{dewdney} characterized $k$-graphic sequences, but unfortunately his characterization \textcolor{black}{cannot be checked in polynomial time and does not yield a feasible reconstruction algorithm.} The same is true for a characterization given by Billington using the notion of tableaux~\cite{billington}. 

Several papers contributed necessary~\cite{billington,Choudum} or sufficient~\cite{behrens,brlek} conditions. In 2018, Deza et al.~\cite{deza} proved that the decision problem is NP-complete for $k\ge 3$. This hardness result motivated research into subclasses of sequences for which a polynomial time solution can be given. Many of them were identified, and reconstruction algorithms mainly based on greedy techniques were provided (see~\cite{step_seq,maximal_instances,fros,omogenei}).

\paragraph{Randomized approach.}
In this paper we investigate the use of randomized algorithms to generate hypergraphs with a given degree sequence. In combinatorics, the use of randomness to prove the existence of certain structures with prescribed properties is usually called the \textit{probabilistic method} and was pioneered by Erd\H{o}s. Its underlying idea can roughly be described as follows: perform a suitable random experiment, show that with positive probability the outcome yields the desired structure, hence such an object must exist (see e.g.~\cite{AlonSpencer}).

The so-called \emph{configuration model}, initially used for regular graphs, generates random graphs with a given degree sequence, see e.g.\ \cite{Bollobas85} as well as \cite{vdH_book} and the references therein. In this model each vertex is equipped with so-called half-edges, where the number of these half-edges is equal to its desired degree. Then two half-edges are chosen uniformly at random and combined to create an edge until all half-edges are gone. In general, this procedure may yield loops or parallel edges, which are precisely the outcomes that we would like to avoid. Thus, one is interested in the probability that the obtained graph is simple. In \cite{Janson2009}, for example, a necessary and sufficient condition on the degree sequences is given that ensures that this probability does not converge to zero as the numbers of vertices and edges tend to infinity. For a non-asymptotic approximation of the probability we refer to \cite{Angel}.

One can  generalize the configuration model to $k$-hypergraphs directly as done, for example, in \cite{Cooper,CooperFriezeMolloyReed}. Even though the half-edges are not truly half-edges anymore, but rather $1/k$-edges for $k\neq2$, we continue referring to them as half-edges. It seems intuitive that for larger $k$ it becomes more unlikely to draw the exact same edge twice, so the probability to get parallel edges should be small. On the other hand, it becomes more likely to produce loops. The probability that two given half-edges of a vertex $v$ are contained in a same edge is given by $1/(\sigma-1)$. By summing over all choices for pairs of half-edges adjacent to a single vertex and accounting for the fact that a single edge can contain at most $\binom{k}{2}$ pairs of equal vertices, we obtain the lower bound 
\begin{equation}
	\label{eq:configmodel}
\binom{k}{2}^{-1}\sum_{i=1}^n \frac{1}{\sigma-1} \binom{d_i}{2}
\end{equation}for the expected number of loops. Since this expression can tend to infinity with growing $n$, for example when $\min_{i\in[n]}d_i\to\infty$ as $n\to\infty$,  the aim of this paper is to design a model that works better in such scenarios.

\paragraph{Our approach.}
We model the half-edges of the vertices $\{1,\dots,n\}$ as balls that are distributed and then drawn from a suitable number of boxes. Here is the rough idea:

\begin{itemize}
	\item[1.] Consider $k+1$ boxes with labels $1,\dots,k+1$, and for all $i\in[n]$ take $d_i$ balls with label $i$, referring to the vertex $i$. 
	\item[2.] Distribute the balls among the boxes such that all the balls with the same label belong to the same box, and any box contains at most $\sigma/k$ balls.
	\item[3.] Consider the $k$ boxes that contain the highest number of balls and, if there is a tie, take the boxes with the largest labels. Draw one ball from each of these boxes uniformly at random to construct an edge consisting of the labels (i.e.~vertices) of the balls. Repeat until all boxes are empty.
\end{itemize}
Our goal is to show under some assumptions on the input data that the algorithm leads to the construction of a simple $k$-hypergraph with positive probability. We start with some remarks on the strategy.

The key idea of our algorithm is to prevent the emergence of loops and thus only having to deal with parallel edges, providing good results also when $k$ is large. 
Indeed, in the second step we put the balls with the same label all into the same box, thus preventing the occurrence of loops. However, it is not clear how to always find such an allocation of the balls to the boxes. 

Note that it would of course be more intuitive to take only $k$ boxes instead of $k+1$, but this would mean that every box needs to be filled with exactly $\sigma/k$ balls, while still satisfying the constraints on putting all balls with the same label into the same box. This problem is called the \emph{multi-way number partitioning problem} and is known to be NP-hard~\cite{GJ}. By taking $k+1$ boxes instead, we have some margin on the fill heights that allows us to find such an allocation, under mild assumptions. One could also think about taking more than $k+1$ boxes. However, this does not improve the results but slightly weakens them.

Furthermore we remark that it is also not obvious that one can repeat the third step until all boxes are empty: it could be that we reach a stage where two boxes are empty, but there are still other non-empty boxes. 
It turns out that our assumptions on the degree sequence are sufficient to ensure that this will not happen.

\paragraph{Results.}
The following theorems state which assumptions guarantee that our general algorithmic approach of distributing balls into boxes will work. 
The pseudo-code of algorithms with the desired properties will be given in Section~\ref{sec:algos}.

\begin{Thm}\label{thm:uniform_distribution_of_half_edges}
	For $n\in\N$ and $k\geq 3$, let $\pi=(d_1,\dots,d_n)$ be a sequence such that \mbox{$k(k+1)d_{k+2}\leq \sigma$}. 
	Then there is a polynomial time randomized algorithm  
	that always returns a $k$-hypergraph $H$ with degree sequence $\pi$ and satisfies
	\begin{align*}
		\p(H\text{ is simple})\geq 1-\frac{k+1}{2}\bigg(\frac{3k}{2}\bigg)^{k-2} \frac{d_1^{k}}{\sigma^{k-2}}.
	\end{align*}
\end{Thm}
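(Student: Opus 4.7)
\emph{Feasibility of the allocation (Step 2).} My plan is first-fit-decreasing: place vertex $j$'s block of $d_j$ balls alone in box $j$ for $j \leq k+1$ (feasible since $d_j \leq d_1 \leq \sigma/k$), then insert each remaining block $d_j$ ($j \geq k+2$) into the currently least-loaded box. If some $d_j$ failed to fit, every box would already contain more than $\sigma/k - d_j$ balls, so the total would exceed $(k+1)(\sigma/k - d_j) = \sigma + \sigma/k - (k+1)d_j$; the hypothesis $k(k+1)d_{k+2} \leq \sigma$ gives $(k+1)d_j \leq \sigma/k$, which would force the total to exceed $\sigma$, a contradiction. Hence Step~2 always succeeds, and the initial loads satisfy $C_0(B_j) \leq \sigma/k$ for every box $j$.

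\emph{Termination of the drawing (Step 3).} Since the only randomness per round is over which ball is picked from each chosen box, the per-box counts evolve deterministically. I would show by induction on the round index that the top-$k$ rule with tie-breaking by largest label keeps the configuration balanced enough that, at the instant the first box empties, all remaining non-empty boxes share a common count and therefore empty together in the final rounds. Consequently, the algorithm never encounters fewer than $k$ non-empty boxes while balls remain, so it always returns a $k$-hypergraph with the required degree sequence.

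\emph{Simplicity via expected parallel pairs.} Loops are impossible by design (all balls labelled $v$ lie in one box, and each edge uses at most one ball per box), so failure equals the existence of parallel edges. I would apply Markov's inequality to the expected number of parallel pairs. Fix rounds $s < t$ and let $B_s, B_t$ denote the deterministic $k$-box subsets drawn. If $B_s \neq B_t$ then $e_s \neq e_t$, because each vertex lives in a unique box. Conditional on $B_s = B_t = B$, the draws inside each box form a uniformly random permutation of its balls, these permutations are independent across boxes, and using $\sum_{v \in B_j} d_v^2 \leq d_1\, C_0(B_j)$ one obtains
\[
\p(e_s = e_t \mid B_s = B_t = B) = \prod_{j \in B} \frac{\sum_{v \in B_j} d_v(d_v - 1)}{C_0(B_j)(C_0(B_j) - 1)} \leq \prod_{j \in B} \frac{d_1}{C_0(B_j)}.
\]
Setting $n_i = \sigma/k - C_0(B_i)$ for the number of rounds in which box $i$ is idle (so $\binom{n_i}{2}$ counts round-pairs sharing idle box $i$), this yields
\[
\p(H \text{ not simple}) \leq \sum_{i=1}^{k+1} \binom{n_i}{2} \prod_{j \neq i} \frac{d_1}{C_0(B_j)}.
\]

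\emph{The main obstacle} will be to compress this bound into $\tfrac{k+1}{2}(3k/2)^{k-2} d_1^k/\sigma^{k-2}$. With $c_j = C_0(B_j)/\sigma$, this reduces to bounding $\sum_i (1/k - c_i)^2/\prod_{j \neq i} c_j$ over the simplex $\{c : \sum_j c_j = 1,\, c_j \leq 1/k\}$, subject to the lower bounds on the $c_j$ that the greedy allocation provides. I expect the extremum to sit on the boundary of the feasible set (with a few boxes saturated at $\sigma/k$), with the constant $3k/2$ emerging from a convexity/Lagrangian analysis of this finite-dimensional optimisation; tightening the numerics is the bulk of the remaining technical work. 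Polynomial running time is evident, since sorting plus greedy allocation takes $O(n \log n)$ and the drawing loop uses $\sigma/k$ rounds.
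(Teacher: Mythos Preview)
Your plan matches the paper's approach: greedy allocation into $k+1$ boxes, a deterministic termination argument for the drawing phase, and a first-moment bound on parallel pairs organised by which box is idle in a given round. The feasibility argument and the conditional-probability computation for $\p(e_s=e_t)$ are correct and essentially identical to the paper's.

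Where you go astray is in labelling the final compression as the ``main obstacle'' and proposing a Lagrangian analysis over the simplex. No optimisation is needed; two elementary observations give the constant directly. First, every edge that avoids box $\ell$ consumes one ball from each of the other $k$ boxes, so $n_\ell\le\min_{j\neq\ell}C_0(B_j)$. Thus $\binom{n_\ell}{2}\le\tfrac12\bigl(\min_{j\neq\ell}C_0(B_j)\bigr)^2$ cancels against the two smallest factors in the denominator $\prod_{j\neq\ell}C_0(B_j)$, leaving a product of $k-2$ reciprocals, each bounded by $1/|B_{k-1}|$ once the boxes are sorted by size. Second, since the largest $k-2$ boxes together hold at most $(k-2)\sigma/k$ balls, the remaining three boxes share at least $2\sigma/k$, and $|B_{k-1}|$ is the largest of these three, so $|B_{k-1}|\ge 2\sigma/(3k)$. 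Substituting yields that each of the $k+1$ summands is at most $\tfrac12(3k/2)^{k-2}d_1^{k}/\sigma^{k-2}$, which is exactly the claimed bound. Your termination sketch is also too thin to stand as a proof; the paper makes it concrete by tracking the deficit $D=\sum_{i=2}^{k}(|B_1|-|B_i|)$, observing $D\le|B_{k+1}|$ initially and that $D$ drops by one each round in which $B_{k+1}$ is drawn from, so the first $k$ boxes equalise before $B_{k+1}$ empties.
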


In the setting of the previous theorem one obviously has 
\begin{align}
        \p(H\text{ is simple})\to1 \label{eq:asympt_simple}
\end{align}
as $n\to\infty$ if
\begin{align}
	d_1^k=o(\sigma^{k-2}).\label{condition:2}
\end{align}
This is the case when the degrees in the sequence $\pi$ are 
either sufficiently small or sufficiently close to each other, as expressed in the following two corollaries.

\begin{Cor}
	Let $k\geq3$. If $d_1\leq Cn^\alpha$ for $C>0$ and $\alpha<1-\frac{2}{k}$, then~\eqref{eq:asympt_simple} holds.
\end{Cor}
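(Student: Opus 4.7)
The plan is to invoke Theorem~\ref{thm:uniform_distribution_of_half_edges} directly: the conclusion~\eqref{eq:asympt_simple} will follow as soon as (a) the hypothesis $k(k+1)d_{k+2}\leq\sigma$ is satisfied for all sufficiently large $n$, and (b) the error term $\frac{k+1}{2}(3k/2)^{k-2}\,d_1^{k}/\sigma^{k-2}$ tends to zero. Both facts are consequences of the two elementary bounds $d_1\leq Cn^{\alpha}$ (by assumption) and $\sigma\geq n$ (because each $d_i\geq 1$).

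For the hypothesis, I would use $d_{k+2}\leq d_1\leq Cn^{\alpha}$ together with $\sigma\geq n$ to reduce $k(k+1)d_{k+2}\leq\sigma$ to the inequality $k(k+1)C n^{\alpha}\leq n$. Since $\alpha<1-2/k<1$ gives $1-\alpha>0$, this is equivalent to $n^{1-\alpha}\geq k(k+1)C$ and hence holds for every $n$ beyond some threshold $n_0=n_0(k,C,\alpha)$; for $n<n_0$ there is nothing to prove since the asymptotic statement only concerns $n\to\infty$.

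For the error term I would again combine the two bounds to estimate
\[
\frac{d_1^{k}}{\sigma^{k-2}}\;\leq\;\frac{C^{k} n^{\alpha k}}{n^{k-2}}\;=\;C^{k}\,n^{\alpha k-(k-2)}.
\]
The exponent equals $k\bigl(\alpha-(1-2/k)\bigr)$, which is strictly negative by the assumption $\alpha<1-2/k$. Thus $d_1^k/\sigma^{k-2}\to 0$ as $n\to\infty$, i.e.~\eqref{condition:2} holds, and Theorem~\ref{thm:uniform_distribution_of_half_edges} yields $\mathbf{P}(H\text{ is simple})\geq 1-o(1)$.

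There is no real obstacle here: the corollary is essentially a bookkeeping exercise designed to exhibit a natural sufficient condition for~\eqref{condition:2}. The only subtlety worth flagging is that the threshold $1-2/k$ appears in two different roles, namely as the borderline exponent making the error term vanish and, via the much weaker requirement $\alpha<1$, as what is needed to secure the hypothesis of Theorem~\ref{thm:uniform_distribution_of_half_edges}; the stronger condition $\alpha<1-2/k$ in the corollary therefore subsumes both.
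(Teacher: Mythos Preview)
Your proposal is correct and follows the same route as the paper, which does not give an explicit proof but simply notes that the corollary follows by verifying condition~\eqref{condition:2} in the setting of Theorem~\ref{thm:uniform_distribution_of_half_edges}. You are in fact slightly more thorough than the paper, since you explicitly check that the hypothesis $k(k+1)d_{k+2}\leq\sigma$ of Theorem~\ref{thm:uniform_distribution_of_half_edges} is eventually satisfied, using $\sigma\geq n$; the paper leaves this implicit.
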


\begin{Cor}
\label{cor:2}
	Let $k\geq3$ and define $\rho:=d_1/d_n$. 
	If 
	\[
	d_1^{2}\bigg(\frac{\rho}{n}\bigg)^{k-2} \to 0,
	\]
then~\eqref{eq:asympt_simple} holds.
\end{Cor}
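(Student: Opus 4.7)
The plan is to deduce Corollary~\ref{cor:2} from Theorem~\ref{thm:uniform_distribution_of_half_edges} together with the implication $\eqref{condition:2}\Rightarrow\eqref{eq:asympt_simple}$ stated between them. Two things have to be verified: (i) the hypothesis $k(k+1)d_{k+2}\leq\sigma$ of the theorem holds for all sufficiently large $n$, and (ii) the hypothesis $d_1^2(\rho/n)^{k-2}\to 0$ implies $d_1^k=o(\sigma^{k-2})$.

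For both points the key estimate is the trivial but crucial inequality
\[
\sigma=\sum_{i=1}^n d_i\geq n\,d_n=\frac{n\,d_1}{\rho}.
\]
From this, $\dfrac{d_1^k}{\sigma^{k-2}}\leq d_1^k\cdot\dfrac{\rho^{k-2}}{n^{k-2}d_1^{k-2}}=d_1^2\bigl(\rho/n\bigr)^{k-2}$, which goes to $0$ by assumption; this settles point (ii). For point (i), since $d_1\geq d_n\geq 1$, the assumption $d_1^2(\rho/n)^{k-2}\to 0$ forces $(\rho/n)^{k-2}\to 0$, hence $\rho/n\to 0$. Therefore, for all sufficiently large $n$, we have $k(k+1)\rho\leq n$, so that
\[
k(k+1)d_{k+2}\leq k(k+1)d_1\leq \frac{n\,d_1}{\rho}\leq\sigma,
\]
which is precisely the hypothesis of Theorem~\ref{thm:uniform_distribution_of_half_edges}.

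Combining (i) and (ii), Theorem~\ref{thm:uniform_distribution_of_half_edges} applies for all large $n$ and yields $\mathbb{P}(H\text{ is simple})\geq 1-\tfrac{k+1}{2}(\tfrac{3k}{2})^{k-2}d_1^k/\sigma^{k-2}\to 1$, which is~\eqref{eq:asympt_simple}.

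There is essentially no real obstacle here: the whole statement is a one-line comparison between $\sigma$ and $n d_n$, packaged to control $d_1^k/\sigma^{k-2}$. The only point worth being careful about is to check the side hypothesis $k(k+1)d_{k+2}\leq\sigma$ required to invoke the theorem, but as shown above it follows automatically, for large $n$, from the assumption $\rho/n\to 0$ implicit in the corollary's hypothesis.
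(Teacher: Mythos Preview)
Your proof is correct and is exactly the argument the paper intends: it leaves Corollary~\ref{cor:2} unproved, treating it as an immediate consequence of the bound $\sigma\geq n d_n = n d_1/\rho$, which you have spelled out. Your additional care in checking that the side hypothesis $k(k+1)d_{k+2}\leq\sigma$ of Theorem~\ref{thm:uniform_distribution_of_half_edges} also follows (via $\rho/n\to 0$) is a point the paper glosses over entirely.
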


Looking at the reduction for NP-hardness in~\cite{deza}, Corollary~2.1, it is clear that the decision problem remains NP-hard for $k\ge 4$ even when $d_1\le n^3$. On the other hand, using our Corollary~\ref{cor:2} it is now clear that for example for $k=15$ any sequence satisfying
$n^{5/2} \le d_n \le d_1 \le n^3$ is $k$-graphic if $n$ is sufficiently large.

Obviously, the applicability of Theorem~\ref{thm:uniform_distribution_of_half_edges} depends heavily on the role of $d_1$ in $\pi$. Consider for example the sequence 

\begin{equation}
	\label{eq:sequence}
	\pi:= \bigg(\underbrace{\frac{n}{\log(n)^3},\ldots,\frac{n}{\log(n)^3}}_{\log(n)}, \underbrace{\frac{\sqrt{n}}{\log(n)},\ldots,\frac{\sqrt{n}}{\log(n)}}_{n-\log(n)}\bigg),
\end{equation}

after appropriate roundings to obtain integers. Here we have 
$\sigma\approx \frac{n^{3/2}}{\log(n)}$ and hence Condition~\eqref{condition:2} is not satisfied for $k=4$. Therefore, we give another result which will allow us to ignore the first elements of the sequence $\pi$. 

\begin{Thm}\label{thm:many_small_degrees}
	For $k\ge 4$, $n\in\N$ and $\pi=(d_1,\dots,d_n)$, let $m\in[n]$ be maximal with
	\begin{align*}
		\frac{4\sigma}{k+1}\leq \sum_{i=m}^nd_i.
	\end{align*}
	If $k(k+1)d_{k-2}\leq \sigma$ and $5k(k+1)d_m\leq 4\sigma$,
	then there is a polynomial time randomized algorithm 
	that always returns a $k$-hypergraph $H$ with degree sequence $\pi$ and satisfies
	\begin{align*}
		\p(\text{H is simple})\geq 1-\frac{3k(k+1)}{4}\frac{d_m^{3}}{\sigma}.
	\end{align*}
\end{Thm}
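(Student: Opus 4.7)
Proof plan. I would adapt the balls-and-boxes algorithm underlying Theorem~\ref{thm:uniform_distribution_of_half_edges} to a setting where the top few degrees may violate the hypothesis $k(k+1)d_{k+2}\le\sigma$ assumed there. The guiding idea is that the tail $d_m,\dots,d_n$ carries a definite fraction $\ge 4/(k+1)$ of the total degree and is uniformly bounded by $d_m$; this allows a separate treatment of the heavy part $\{1,\dots,m-1\}$ and the light part $\{m,\dots,n\}$ when distributing balls into the $k+1$ boxes.

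First, I would show that there exists a feasible allocation of balls into boxes in which all balls of a common label lie in a single box and every box contains at most $\sigma/k$ balls. The hypothesis $k(k+1)d_{k-2}\le\sigma$ implies that all but at most $k-3$ degrees are bounded by $\sigma/(k(k+1))$, so the truly large degrees can be individually placed into dedicated boxes. The remaining mass---the medium-size heads $d_{k-2},\dots,d_{m-1}$ together with the light vertices---is distributed across the remaining boxes by a greedy scheme in the spirit of Theorem~\ref{thm:uniform_distribution_of_half_edges}. Here $\sum_{i=m}^n d_i\ge 4\sigma/(k+1)$ ensures that enough light mass is available to balance the boxes, while $5k(k+1)d_m\le 4\sigma$ bounds the worst-case greedy imbalance, keeping every box below $\sigma/k$.

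Next I would run the draw procedure from Theorem~\ref{thm:uniform_distribution_of_half_edges}: at each step, pick the $k$ fullest boxes (ties broken by largest label) and draw one ball from each uniformly at random to form an edge. Loops never arise, since balls sharing a label live in a common box. I would also re-verify, now under the weaker assumption involving $d_m$ rather than $d_{k+2}$, that the procedure does not stall with fewer than $k$ nonempty boxes; the argument should parallel the corresponding step in the proof of Theorem~\ref{thm:uniform_distribution_of_half_edges}.

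Finally, I would bound the expected number of parallel edges and apply Markov's inequality. The key point is that the randomness in each drawn edge is concentrated in the few mixed boxes (those holding multiple labels) rather than in the dedicated heavy boxes, and each label in a mixed box is drawn with probability of order $d_m/\sigma$. Summing over pairs of steps and combinatorial matching patterns should produce the asserted bound $\frac{3k(k+1)}{4}\cdot\frac{d_m^3}{\sigma}$. I expect the main obstacle to be the combinatorial bookkeeping that extracts the explicit constant and the cubic dependence on $d_m$, together with verifying the feasibility of the box allocation under the exact interplay of the three hypotheses; qualitatively, the reduction from the $d_1^k$-type bound of Theorem~\ref{thm:uniform_distribution_of_half_edges} to a $d_m^3$-type bound here reflects the fact that the heavy vertices contribute deterministically to any edge that draws from their box, so that the effective random slice of each edge is low-dimensional.
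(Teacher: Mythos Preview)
Your overall architecture---separate the vertices into a heavy and a light part, allocate them to disjoint groups of boxes, then run the draw procedure---matches the paper. But the split point you choose is the wrong one, and this breaks both the feasibility of the allocation and the probability bound.

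You propose to dedicate $k-3$ boxes to the vertices $1,\dots,k-3$ individually and to distribute $d_{k-2},\dots,d_n$ greedily among the remaining four boxes. Nothing in the hypotheses forces $d_1,\dots,d_{k-3}$ to be large, so the residual mass $\sum_{i\ge k-2}d_i$ can be essentially all of $\sigma$; the four mixed boxes would then each carry roughly $\sigma/4$ balls, exceeding the cap $\sigma/k$ for $k\ge5$. The hypothesis $\sum_{i\ge m}d_i\ge 4\sigma/(k+1)$ you invoke does not help, since you are loading strictly more than this into those four boxes. More importantly, even granting feasibility, your mixed boxes contain vertices $k-2,\dots,n$, so their maximum degree is $d_{k-2}$, not $d_m$. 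The probability of drawing a specific vertex $u$ from a mixed box is $d_u/|B_i|$, of order $kd_{k-2}/\sigma$ in the worst case; the claim ``each label in a mixed box is drawn with probability of order $d_m/\sigma$'' is simply false under your allocation. Carrying your argument through yields at best a bound $\asymp d_{k-2}^{3}/\sigma$, not $d_m^{3}/\sigma$.

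The paper instead splits at $m$: the vertices $1,\dots,m-1$ (which may be far more than $k-3$ in number) are greedily allocated to $k-3$ boxes, and the vertices $m,\dots,n$ to the other four. Feasibility of the heavy side follows from $\sum_{i<m}d_i\le\sigma(k-3)/(k+1)$ combined with $d_{k-2}\le\sigma/(k(k+1))$; feasibility of the light side from the maximality of $m$ combined with $5k(k+1)d_m\le4\sigma$. Now the four light boxes genuinely have maximum degree $\le d_m$, so after omitting any one box $\ell$ at least three light boxes remain; the product in Theorem~\ref{thm:main} picks up $d_m^{3}$ from these and at most $1$ from every other factor (since $\max_{u\in B_i}d_u\le|B_i|$ always). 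A final observation that the largest of those three light boxes has size at least $|B_{k-1}|\ge2\sigma/(3k)$ gives the constant $3k(k+1)/4$. As a minor point, you do not need to re-verify termination: once $(B_1,\dots,B_{k+1})\in\mathrm{Allocation}_{k+1}(\pi)$, the termination argument of Theorem~\ref{thm:main} depends only on the box sizes, not on the allocation scheme.
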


Returning to our example sequence $\pi$ in~\eqref{eq:sequence}, we now have 
$d_m\approx\frac{\sqrt{n}}{\log(n)}$ and, again, $\sigma\approx \frac{n^{3/2}}{\log(n)}$, hence $d_m^3 = o(\sigma)$, thus proving that $\pi$ is indeed $4$-graphic for $n$ large enough.  

\paragraph{Related work.}
We briefly compare the above results to other activities in the area.  
Recently, Dyer et al.~\cite{Dyer} tried to generate simple hypergraphs with given degree sequence uniformly at random using a bijection between bipartite graphs and $k$-hypergraphs, which requires less assumptions than the configuration model for hypergraphs. Their methods allow for scenarios where $d_1=o(\min\{\sigma^{1/2},\sigma^{1-2/k}\})$ (see Theorem 1.6 in~\cite{Dyer}), while they require $d_1=O(\log n)$ for the configuration model (see Lemma~2.3 in~\cite{Dyer}). Our approach does not ask for uniform generation but works for scenarios up to $d_1 = o(\sigma^{1-2/k})$ (compare Condition~\eqref{condition:2}), thus improving the previous result for any $k\geq 5$ (the same result is obtained for $k=3,4$). Moreover, our Theorem~\ref{thm:many_small_degrees} allows us to ignore some vertices of higher degree.

Based on the characterization by Dewdney~\cite{dewdney}, in 2013 Behrens et al. gave sufficient conditions for a sequence to be $k$-graphic~\cite{behrens}. Among others they showed that a sequence is $k$-graphic if $d_1=o(\sigma^{1-1/k})$ (or even if ${d_{1}}/{\sigma^{1-1/k}}$ is less than some constant, Corollary~2.2 in~\cite{behrens}).  While this is a weaker constraint than our Condition~\eqref{condition:2}, their result is non-constructive whereas our methods allow us to generate a $k$-hypergraph with the given degree sequence in polynomial time.

\medskip
The remainder of this paper is organized as follows: in Section~\ref{sec:algos} we provide the implementation and analysis of our algorithms. Finally, in Section~\ref{sec:proofs} we formulate and prove a general result (Theorem~\ref{thm:main}) from which we then deduce Theorems~\ref{thm:uniform_distribution_of_half_edges} and~\ref{thm:many_small_degrees}.

In principle our methods should also apply to situations involving non-uniform hypergraphs, but then the statements and computations will be less appealing.

\section{Algorithms}\label{sec:algos}

We start with Step~2 of our approach sketched in the introduction, i.e.\ we need to distribute the balls representing half-edges among the $k+1$ boxes. To this end we use the following algorithm, employing a greedy strategy.

\begin{algorithm}[h!]\small
    
    \KwIn{$\pi=(d_{1},\dots,d_{n})$ a non-increasing sequence of natural numbers, $\ell\in\N$} 
    Set $B_1,\dots,B_{\ell}=\emptyset$\;

    \For{$i=1,\dots,n$}{
        
    Let $J\subseteq[\ell]$ be such that $|B_j|$ is minimal for all $j\in J$\;
    $j_{\min}=\min(J)$\;
    Add $d_i$ copies of the vertex $i$ to $B_{j_{\min}}$\;
    }
    \KwOut{$(B_1,\dots,B_\ell)$}
 
    \caption{\emph{greedy\_allocation($\pi,\ell$)}}
        \label{alg:greedy_all}
\end{algorithm}

Without further assumptions on the integer sequence $\pi$, it is not clear that one can use Algorithm \ref{alg:greedy_all} to fill the boxes $B_1,\dots,B_{k+1}$ in such a way that the demands in the second step of our approach are met: having all balls with the same label in a single box and no box exceeding $\sigma/k$ balls. The following definition provides a set-theoretic description of these requirements.

\begin{Def} 
	For $n\in\N$ and $\pi=(d_1\dots,d_n)$, we define $\mathrm{Allocation}_{k+1}(\pi)$ as the set of all $(k+1)$-tuples $(B_1,\dots,B_{k+1})$ of multi-sets $B_1,\dots,B_{k+1}$ such that $B_1,\dots,B_{k+1}$ are pairwise disjoint, each $i\in[n]$ is contained exactly $d_i$ times in one of the multi-sets and $\sigma/k\geq |B_1|\geq\ldots\geq |B_{k+1}|$.
\end{Def}

To ensure that the output of Algorithm \ref{alg:greedy_all} for $\ell=k+1$, after ordering by size, belongs to $\mathrm{Allocation}_{k+1}(\pi)$, we only need to control the cardinalities of the multi-sets $B_1,\dots,B_{k+1}$, as all other requirements are obviously satisfied. In Section \ref{sec:proofs}, we will check this condition via the following bound.

\begin{Lemma}\label{lem:allocation}
	For $\pi=(d_1,\dots,d_n)$ and $\ell\in\mathbb{N}$, the algorithm $\emph{greedy\_allocation}(\pi,\ell)$
	yields for all $i=1,\dots,\ell$,
	\begin{align*}
		|B_i|\leq\max \bigg(d_1,\frac{\sigma}{\ell}+d_{\ell+1}\bigg).
	\end{align*}
\end{Lemma}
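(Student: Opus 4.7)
The plan is to bound $|B_{j_{\min}}|$ immediately after the placement of vertex $i$, for each $i\in[n]$, and take the maximum over $i$. Since box sizes only increase during the execution and the final size of any $B_j$ equals its size right after the last step that placed balls into $B_j$, this maximum is an upper bound on $\max_{j\in[\ell]}|B_j|$, which is all we need.

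The natural case split is on whether empty boxes are still available when vertex $i$ is processed, i.e.\ on whether $i\le\ell$ or $i>\ell$. At step $i$ exactly the chunks for vertices $1,\dots,i-1$ have been placed; since the greedy rule always sends balls to a box of minimal (hence $0$) cardinality whenever an empty box exists, the first $\ell$ steps each fill a distinct empty box. Consequently, for $i\le\ell$ the receiving box $B_{j_{\min}}$ is empty when vertex $i$ is placed, while for $i>\ell$ all $\ell$ boxes are already non-empty.

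For $i\le\ell$ the new size is exactly $d_i\le d_1$. For $i>\ell$, before the placement at step $i$ all $\ell$ boxes are non-empty and their total cardinality equals $\sum_{j=1}^{i-1}d_j\le\sigma$; averaging yields $|B_{j_{\min}}|\le\sigma/\ell$. Adding the $d_i\le d_{\ell+1}$ copies of vertex $i$ gives a new cardinality of at most $\sigma/\ell+d_{\ell+1}$. Taking the maximum over both regimes yields the claimed bound.

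I do not foresee any real obstacle here: the argument is a standard greedy/load-balancing analysis. The only point that deserves a moment's care is the reduction to "size right after the last placement": once this observation is made, the averaging step for $i>\ell$ is legitimate and the two regimes combine cleanly.
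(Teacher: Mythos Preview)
Your proof is correct and follows essentially the same approach as the paper: both arguments split into the first $\ell$ steps (giving the $d_1$ bound) and the later steps, where the receiving box, being minimal, has size at most the average $\sigma/\ell$ and then overshoots by at most $d_{\ell+1}$. Your formulation via ``size right after the last placement'' together with the explicit averaging is a slightly cleaner packaging of the same idea the paper uses when it observes that a box exceeding $\sigma/\ell$ can no longer be minimal.
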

\begin{proof}
	If we were able to distribute all vertices equally, we would obtain $\sigma/\ell$ objects in each box. Once some box $B_i$ contains more than $\sigma/\ell$ elements, there must be another box with fewer than $\sigma/\ell$ elements, so that we no longer put balls into $B_i$.
	
    In the first $\ell$ steps, we fill the vertices $1,\dots,\ell$ into the boxes $B_1,\dots,B_\ell$, respectively. Should $d_1$ exceed $\sigma/\ell$, we obtain $d_1$ as an upper bound on $B_1,\dots,B_\ell$ after the first $\ell$ steps. All boxes that are still below $\sigma/\ell$ can now overshoot $\sigma/\ell$ by at most $d_{\ell+1}$, as this is the largest degree that is left.
\end{proof} 

We continue with the algorithm for the third step of our approach, Algorithm~\ref{alg:sample_edges}, which samples the edges of the hypergraph.

\begin{algorithm}[h!]\small
    
    \KwIn{$\pi=(d_1,\dots,d_n)$ a non-increasing sequence of natural numbers and $(B_1,\dots,B_{k+1})\in\mathrm{Allocation}_{k+1}(\pi)$}

    Set $E=\emptyset$\;
    \For{$i=1,\dots,\sigma/k$}{

    Let $J\subseteq[k+1]$ be such that $|B_j|$ is minimal for all $j\in J$\;
    $j_{\min}=\min(J)$; 
    $\text{edge}=\emptyset$\;

    \For{$\ell=1,\dots,k+1,\ell\neq j_{\min}$}{
    choose $b_{\ell}$ uniformly at random from $B_{\ell}$\;
    $\text{edge}=\text{edge}\cup \{b_{\ell}\}$\;
    $B_{\ell}=B_{\ell}\backslash \{b_{\ell}\}$\;
 
    }
    
    $E=E\cup \{\text{edge}\}$; 
    }
    
    \KwOut{$E$} 
     \caption{\emph{sample\_edges($\pi,B_1,\dots,B_{k+1}$)}}
        \label{alg:sample_edges}
\end{algorithm}

 In Theorem~\ref{thm:main} we will gather some properties of \emph{sample\_edges} which are key ingredients for our proofs of Theorem \ref{thm:uniform_distribution_of_half_edges} and Theorem \ref{thm:many_small_degrees}. 

Finally, we present two algorithms that take a degree sequence and sample a $k$-hypergraph by combining Algorithm \ref{alg:greedy_all} and Algorithm \ref{alg:sample_edges}.

\begin{algorithm}[h!]\small 
    
    \KwIn{$\pi=(d_1,\dots,d_n)$ a non-increasing sequence of natural numbers, $k\geq3$} 
    $(B_1,\dots,B_{k+1})=\emph{greedy\_allocation($\pi,k+1$)}$\;
    Relabel $B_1,\dots,B_{k+1}$ such that they are decreasing in size\;
    \If{$(B_1,\dots,B_{k+1})\notin\mathrm{Allocation}_{k+1}(\pi)$}{
       \Return(Error)
    
    }
    $E$=\emph{sample\_edges($B_1,\dots,B_{k+1}$)}\;
    \KwOut{$E$}
 
    \caption{\emph{sample\_hypergraph($\pi,k$)}}
        \label{alg_sample_hypergraph}
\end{algorithm}

\begin{algorithm}[h!]\small 
    
    \KwIn{$\pi=(d_{1},\dots,d_{n})$ a non-increasing sequence of natural numbers, $k\geq 4,m\in[n]$} 
    $(B_1,\dots,B_{4})$=\emph{greedy\_allocation}($(d_m,\dots,d_{n}),4)$\;
    $(B_5,\dots,B_{k+1})$=\emph{greedy\_allocation}$((d_1,\dots,d_{m-1}),k-3)$\;
    Relabel $B_1,\dots,B_{k+1}$ such that they are decreasing in size\;
     \If{$(B_1,\dots,B_{k+1})\notin\mathrm{Allocation}_{k+1}(\pi)$}{
       \Return(Error)
    
    }
    $E$=\emph{sample\_edges($B_1,\dots,B_{k+1}$)}\;
    \KwOut{$E$}
 
    \caption{\emph{sample\_hypergraph\_2($\pi,k,m$)}}
        \label{algorithm4}
\end{algorithm}

Algorithm~\ref{alg_sample_hypergraph}, where we simply concatenate Algorithm \ref{alg:greedy_all} and Algorithm \ref{alg:sample_edges},  is more obvious, and is used to obtain Theorem \ref{thm:uniform_distribution_of_half_edges}. Algorithm~\ref{algorithm4} is designed for the situation of many vertices with small degree. It seems plausible that having many small degrees simplifies the task of avoiding parallel edges. An investigation of Algorithm~\ref{algorithm4} yields Theorem \ref{thm:many_small_degrees}.

\begin{Prop}\label{lem:runtime}
    Algorithm \ref{alg_sample_hypergraph} and Algorithm \ref{algorithm4} have computational costs of $O(kn+\sigma)$.
\end{Prop}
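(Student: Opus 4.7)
The plan is to bound the running time of each sub-routine separately and then combine. Since both Algorithm~\ref{alg_sample_hypergraph} and Algorithm~\ref{algorithm4} are built from the same two building blocks, namely \emph{greedy\_allocation} and \emph{sample\_edges}, plus a relabeling and a membership test for $\mathrm{Allocation}_{k+1}(\pi)$, it suffices to analyze each of these four pieces and verify that each fits within $O(kn+\sigma)$.

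First I would analyze \emph{greedy\_allocation}$(\pi,\ell)$. It executes $n$ outer iterations; in iteration~$i$ a minimum-size box among the $\ell$ boxes is found by a linear scan in $O(\ell)$ time, and then $d_i$ copies of vertex~$i$ are appended to the selected box in $O(d_i)$ time. Summing yields the bound $O(n\ell+\sigma)$. Applied with $\ell=k+1$ in Algorithm~\ref{alg_sample_hypergraph}, this is $O(nk+\sigma)$. In Algorithm~\ref{algorithm4} there are two calls, on sub-sequences of lengths $n-m+1$ and $m-1$ with $\ell=4$ and $\ell=k-3$ respectively; summing the two bounds still gives $O(nk+\sigma)$, since the two ball counts add up to $\sigma$ and $4(n-m+1)+(k-3)(m-1)=O(nk)$.

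Next I would analyze \emph{sample\_edges}$(\pi,B_1,\dots,B_{k+1})$. The outer loop runs $\sigma/k$ times. Each iteration locates the smallest among $k+1$ boxes in $O(k)$ time, and then performs a uniform draw together with a deletion from each of the remaining $k$ boxes. Here lies the one place that merits care: a naive representation would make the deletion linear in the box size, so I would store each box as a resizable array and use the standard swap-to-last trick, which makes uniform sampling followed by removal $O(1)$ per operation. Thus each outer iteration costs $O(k)$, for a total of $O(\sigma)$.

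Finally, relabeling the $k+1$ boxes in decreasing order of size is a sort of $k+1$ integers, costing $O(k\log k)$, and checking whether $(B_1,\dots,B_{k+1})\in\mathrm{Allocation}_{k+1}(\pi)$ amounts to inspecting the $k+1$ cardinalities against $\sigma/k$, costing $O(k)$. Both are swallowed by $O(nk+\sigma)$ (under the harmless assumption $k\le n$, which holds whenever $\pi$ can be $k$-graphic). Adding the four contributions yields $O(nk+\sigma)$ for Algorithm~\ref{alg_sample_hypergraph} and, by the same computation, for Algorithm~\ref{algorithm4}. The main (and essentially only) obstacle is the data-structure choice for the sampling step; everything else is a direct sum of the obvious per-step costs.
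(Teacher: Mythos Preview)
Your proof is correct and follows essentially the same decomposition as the paper's own argument: bound \emph{greedy\_allocation} and \emph{sample\_edges} separately and add. You are more explicit than the paper (which simply asserts $O(\ell n)$ for the first routine and $O(\sigma)$ for the second), in particular by accounting for the $O(\sigma)$ cost of physically placing the balls and by spelling out the swap-and-pop trick needed to make uniform draw-and-delete $O(1)$; these details are implicit in the paper but do not constitute a different approach.
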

\begin{proof}
    It can be easily seen that Algorithm \ref{alg:greedy_all} has a  computational cost of $O(\ell n)$ while Algorithm \ref{alg:sample_edges} has a computational cost of $O(k\frac{\sigma}{k})=O(\sigma)$. Combining these observations concludes the proof.
\end{proof}
It is important to note that the computational cost depends on the choice of the parameter $k$, but this does not affect the  polynomiality of our strategy.

\section{Proofs}\label{sec:proofs}

The following theorem investigates the output of the algorithm \emph{sample\_edges} (see Algorithm \ref{alg:sample_edges}). Later on, we apply it to prove Theorem \ref{thm:uniform_distribution_of_half_edges} and Theorem \ref{thm:many_small_degrees}. For a multi-set $A$ we denote by $\mathrm{Supp}(A)=\{a\colon a\in A\}$ the underlying set, which no longer takes into account the multiplicities in $A$.
\begin{Thm} 
	\label{thm:main}
	Consider $n\in\N$ and let $\pi=(d_1,\dots,d_n)$. For $(B_1,\dots,B_{k+1})\in\mathrm{Allocation}_{k+1}(\pi)$ we have that
	\begin{enumerate}
		\item the algorithm \emph{sample\_edges}$(\pi,B_1,\dots,B_{k+1})$ terminates,
		\item provides a $k$-hypergraph $H$ without loops and with degree sequence $\pi$,
		\item and
	\end{enumerate}
	\begin{align}
	&\p(\text{H has no parallel edges})\nonumber\\
	&\geq 1-\sum_{\ell=1}^{k+1}\min_{j\in[k+1]\setminus\{\ell\}}\frac{|B_j|(|B_j|-1)}{2}\prod_{i\in[k+1]\setminus\{\ell\}}\frac{\max_{u\in \mathrm{Supp}(B_i)}d_u}{|B_i|}.\label{eq:thm_main_2nd_line}
	\end{align}
\end{Thm}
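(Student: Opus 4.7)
I would address the three claims in order. For termination, the strategy is to maintain throughout the algorithm the invariant that at most one box is empty, so each iteration can indeed draw one ball from each of the $k$ non-skipped boxes. Initially this holds: if two boxes were empty, the remaining $k-1$ would sum to $\sigma$ while each is at most $\sigma/k$, contradicting $\sigma\le(k-1)\sigma/k$. For the inductive step the key fact is that drawing from the $k$ largest boxes is a "max-decreasing" operation: the maximum strictly decreases by $1$ unless all box sizes coincide, in which case the minimum drops by one while the next iterations quickly restore the non-trivial case. Together with the total $\sigma-kt$, this suffices to rule out two simultaneous empty boxes before iteration $\sigma/k$, at which point the total reaches $0$ and the algorithm halts.

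Claim (2) is then structural. Each iteration draws one ball from each of $k$ distinct boxes, and because every vertex has all of its $d_i$ balls in the same box (a defining feature of $\mathrm{Allocation}_{k+1}(\pi)$), the $k$ drawn vertex labels are pairwise distinct, so the resulting edge contains no loop. Moreover each ball of vertex $i$ is drawn exactly once, so $\deg(i)=d_i$.

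For the probability bound I would apply the first-moment method: letting $X$ denote the number of unordered pairs of parallel edges, Markov gives $\p(\text{some parallel edge})\le\E[X]$. The pivotal observation is that two equal edges must have skipped the same box: if edges $e$ and $e'$ had skipped distinct boxes $B_\ell\ne B_{\ell'}$, then $e'$ would contain a ball from $B_\ell$ while $e$ would not. Splitting $\E[X]=\sum_{\ell=1}^{k+1}\E[X_\ell]$ accordingly, the number of iterations that skip $B_\ell$ is $n_\ell=\sigma/k-|B_\ell|$ (the non-skipping iterations are exactly those that draw from $B_\ell$, totaling $|B_\ell|$), so $X_\ell$ ranges over $\binom{n_\ell}{2}$ candidate pairs.

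Within each box the sequence of drawn balls is a uniform random permutation of its contents, and these permutations are independent across boxes. Hence for any candidate pair of iterations both skipping $B_\ell$, the balls drawn from $B_j$ at the two iterations form a uniformly random pair of distinct balls from $B_j$, so they carry the same vertex label with probability $\sum_u d_u(d_u-1)/(|B_j|(|B_j|-1))\le \max_{u\in\mathrm{Supp}(B_j)}d_u/|B_j|$ (using $\sum_u d_u(d_u-1)\le |B_j|(\max_u d_u-1)$ together with $\max_u d_u\le|B_j|$); these events are independent across $j\ne\ell$. Combining gives $\E[X_\ell]\le\binom{n_\ell}{2}\prod_{j\ne\ell}\max_u d_u/|B_j|$. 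The cleaner form of \eqref{eq:thm_main_2nd_line} then follows from the bound $n_\ell\le|B_j|$ for every $j\ne\ell$: the $k-1$ largest boxes sum to at most $(k-1)\sigma/k$, so the two smallest satisfy $|B_k|+|B_{k+1}|\ge\sigma/k$, whence $|B_\ell|+|B_j|\ge\sigma/k$ and thus $\binom{n_\ell}{2}\le\min_{j\ne\ell}\binom{|B_j|}{2}$. I expect the termination invariant to be the subtlest step; the probability estimate, while involving several ingredients, is then a routine first-moment calculation.
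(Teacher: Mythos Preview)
Your treatment of Claims 2 and 3 is correct and aligns with the paper. The observation you make explicit --- that which box is skipped at each iteration is a deterministic function of the initial sizes, so the draws from each $B_j$ form a uniform random permutation independently across boxes --- is exactly what underlies the paper's uniformity computation for the pairs $(e_i,e_j)$. Your route to $n_\ell\le|B_j|$ via $|B_\ell|+|B_j|\ge\sigma/k$ is a neat variant of the paper's more direct observation that every edge skipping $B_\ell$ consumes a ball from $B_j$.

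The termination argument, however, has a real gap. The invariant ``at most one box is empty'' is indeed true, but the reasoning you offer --- max-decreasing together with the running total $\sigma-kt$ --- does not establish the inductive step. Concretely, you have not ruled out reaching a configuration such as $(3,2,1,0)$ with $k=3$: here exactly one box is empty, yet the next step yields $(2,1,0,0)$ with positive total remaining. Neither ``the maximum drops by one'' nor ``the total is a multiple of $k$'' prevents this transition; what is needed is a proof that such configurations are unreachable from any element of $\mathrm{Allocation}_{k+1}(\pi)$, and your sketch does not provide one. The paper supplies this via a different, more structured argument: the tie-breaking rule guarantees that $B_{k+1}$ remains a minimum-size box throughout, and the potential
\[
D=\sum_{i=2}^{k}\bigl(|B_1|-|B_i|\bigr),
\]
which initially satisfies $D\le|B_{k+1}|$, drops by one every time a ball is taken from $B_{k+1}$ (as long as $D>0$). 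Hence the first $k$ boxes equalize before $B_{k+1}$ empties; thereafter the gap between the fullest and emptiest box never exceeds one, and divisibility of $\sigma$ by $k$ forces each of $B_1,\dots,B_k$ to hold exactly one ball at the moment $B_{k+1}$ reaches zero.
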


\begin{proof}
    We start by showing the first claim, i.e.\ that the algorithm terminates. From $(B_1,\dots,B_{k+1})\in\mathrm{Allocation}_{k+1}(\pi)$ we deduce  that $B_{k+1}$ is, in the beginning, among the boxes that contain the fewest elements, and it stays that way by construction (in case of a tie concerning the cardinalities $|B_1|,\dots,|B_{k+1}|$, \emph{sample\_edges} chooses the highest label, see Algorithm~\ref{alg:sample_edges}, line~4). We will show that, as soon as $B_{k+1}$ runs empty, all other boxes contain precisely one ball each.
    Since $\sigma/k\geq |B_1|\geq \hdots\geq |B_{k+1}|$, there exist $r_1,\dots,r_k\geq 0$ such that
    \begin{align*}
        |B_i|=\sigma/k-r_i \text{ for } i\in[k] \quad \text{ and }\quad|B_{k+1}|=\sum_{i=1}^kr_i.
    \end{align*}
    We compare $|B_{k+1}|$ to the number of balls missing to fill the first $k$ boxes to the height of $B_1$, the fullest one. This number is given by 
    \begin{align*}
        D=\sum_{i=2}^k|B_1|-|B_i|=\sum_{i=2}^k(r_i-r_1)\leq \sum_{i=1}^k r_i=|B_{k+1}|. 
    \end{align*}
    We update $D$ when drawing balls and investigate its changes. Whenever we draw a vertex from the last box, there are only two possible cases.
    \begin{itemize}
        \item \textit{We do not draw a vertex from the first box.} In this case, the first box must be among the boxes that contain the fewest vertices. But since the first box always contains the most vertices (again, by the choice made in case of a tie), we must have already reached $D=0$.
        
        \item \textit{We draw a vertex from the first box.} In this case, the discrepancy between the first box and the one we do not draw from gets reduced by one, whereas all the others stay the same. Therefore, $D$ gets reduced by one.
    \end{itemize}

    From the inequality $D\leq |B_{k+1}|$ above, we conclude that we reach a point where $D=0$, i.e.~the first $k$ boxes have the same number of elements in them before $B_{k+1}$ runs out of balls. When $D=0$, one keeps drawing balls from the first $k$ boxes until all $k+1$ boxes contain the same number of elements. From here on out, the difference between the number of balls in the fullest and least full box can be at most one. Since $k$ divides $\sigma$, there must be one ball in each of the first $k$ boxes when the last box runs empty, which shows the first claim.

   The second claim follows from $(B_1,\dots,B_{k+1})\in\mathrm{Allocation}_{k+1}(\pi)$, where the pairwise disjointedness of $B_1,\dots,B_{k+1}$ ensures the absence of loops.

    It remains to show the inequality in the third claim. For $\ell\in[k+1]$, let $E_\ell$ denote the list of all edges that do not contain a vertex from $B_\ell$. The order of the edges in the list $E_\ell$ shall be the order of their creation in the algorithm. Since $B_1,\dots,B_{k+1}$ are pairwise disjoint, it follows that two lists $E_i$ and $E_j$ cannot share an edge for $i\neq j$. Defining $A_\ell$ as the event that some edge occurs twice in $E_\ell$ for $\ell\in[k+1]$, we deduce that
    \begin{align}
        \p(H \text{ has no parallel edges})\geq1-\sum_{\ell=1}^{k+1}\p(A_\ell).\label{eq:prob_H_simple}
    \end{align}
    We proceed by giving an upper bound on $\p(A_\ell)$ for a fixed $\ell\in[k+1]$. It may be assumed that the boxes $B_i$ with $i\in[k+1]\setminus\{\ell\}$ contain at least two elements each, otherwise we would get $|E_\ell|\leq1$ and thus $\p(A_\ell)=0$. 
    Denote the elements in $E_\ell$ by $e_i$, with $i=1,\dots,|E_\ell|$, so that
    \begin{align}
        \p(A_\ell)\leq \sum_{1\leq i<j\leq |E_\ell|}\p(e_i=e_j). \label{eq:prob_A_ell}
    \end{align}
    To simplify notation, we write edges as vectors where we order the vertices according to the indices of $B_1,\dots,B_{k+1}$ they belong to. Additionally, we assume that the elements of $B_1,\dots,B_{k+1}$ are distinguishable even if they refer to the same vertex. Then, for distinct $i,j\in\{1,\dots,|E_\ell|\}$, the possible choices for $(e_i,e_j)$ are of the form $(f,g)$ given by
    \begin{align*}
        &((f_1,\dots,f_{\ell-1},f_{\ell+1},\dots,f_{k+1}),(g_1,\dots,g_{\ell-1},g_{\ell+1},\dots,g_{k+1}))\\
        &\in\bigg(\bigtimes_{i\in[k+1]\setminus\{\ell\}}B_i\bigg)^2,
    \end{align*}
    
    where $f_s\neq g_s$ for all $s\in[k+1]\setminus\{\ell\}$, because we think of the elements as distinguishable. Since all random choices are with respect to uniform distributions, each possible combination $(f,g)$ must have the same probability, so that
    \begin{align*}
        \p(e_i=f,e_j=g)=\prod_{s\in[k+1]\setminus\{\ell\}}\frac{1}{|B_s|(|B_s|-1)}.
    \end{align*}
    Now let us go back to indistinguishable objects in the boxes whenever they refer to the same vertex. Then we need to make up for the number of copies of a vertex, i.e.\ its degree, and obtain for a fixed possible edge $h\in\bigtimes_{i\in[k+1]\setminus\{\ell\}}\mathrm{Supp}(B_i)$ that
    \begin{align*}
        \p(e_i=e_j=h)=\prod_{s\in[k+1]\setminus\{\ell\}}\frac{d_{h_s}(d_{h_s}-1)}{|B_s|(|B_s|-1)}.
    \end{align*}
    Using the symmetry and summing over all possible choices for $h$, we obtain from \eqref{eq:prob_A_ell} that
    \begin{align*}
        \p(A_\ell)&\leq\frac{|E_\ell|(|E_\ell|-1)}{2}\p(e_1=e_2)\nonumber\\
        &=\frac{|E_\ell|(|E_\ell|-1)}{2}\prod_{i\in[k+1]\setminus\{\ell\}}\sum_{j\in \mathrm{Supp}(B_i)}\frac{d_j(d_j-1)}{|B_i|(|B_i|-1)}.
    \end{align*}
    Next we observe that all edges in $E_\ell$ need to contain vertices from all $B_i$, with $i\in[k+1]\setminus\{\ell\}$. We obtain
    \begin{align*}
        |E_\ell|\leq\min_{j\in[k+1]\setminus\{\ell\}}|B_j|.
    \end{align*}
    Moreover, it holds for all $i\in[k+1]$ that
    \begin{align*}
        \sum_{j\in \mathrm{Supp}(B_i)}d_j(d_j-1)&\leq \max_{u\in \mathrm{Supp}(B_i)}d_u\sum_{j\in \mathrm{Supp}(B_i)}(d_j-1)\\
        &\leq \max_{u\in \mathrm{Supp}(B_i)}d_u(|B_i|-1).
    \end{align*}
    Combining the three inequalities above with~\eqref{eq:prob_H_simple} yields~\eqref{eq:thm_main_2nd_line}.
\end{proof}

\begin{proof}[Proof of Theorem~\ref{thm:uniform_distribution_of_half_edges}]
   We consider Algorithm~\ref{alg_sample_hypergraph}, which has a polynomial runtime by Proposition \ref{lem:runtime}. Theorem~\ref{thm:main} immediately implies all other claims aside from the probability bound if we can show that   $(B_1,\dots,B_{k+1})\in\mathrm{Allocation}_{k+1}(\pi)$. The only property which is not clear by construction is that $B_1$ contains at most $\sigma/k$ elements (after relabeling the boxes in the second line of Algorithm~\ref{alg_sample_hypergraph}). From Lemma~\ref{lem:allocation} with $\ell=k+1$ it follows that 
    \begin{align*}
        |B_1|\leq \max\bigg(d_1,\frac{\sigma}{k+1}+d_{k+2}\bigg).
    \end{align*}
    By our assumption on the input sequence, we know that $d_1\leq \sigma/k$. On the other hand, using the assumed bound on $d_{k+2}$, we compute
    \begin{align*}
        \frac{\sigma}{k+1}+d_{k+2}\leq \frac{\sigma}{k+1}+\frac{\sigma}{k(k+1)}=\frac{\sigma}{k}.
    \end{align*}
    This implies $|B_1|\leq \sigma/k$ and thus $(B_1,\dots,B_{k+1})\in\mathrm{Allocation}_{k+1}(\pi)$.

    It remains to show the lower bound on the probability of $H$ being simple. By Theorem~\ref{thm:main}, the resulting $k$-hypergraph has no loops and the probability of having no parallel edges is bounded from below by
    \begin{align*}
        &\p(H\text{ has no parallel edges})\\
  &\geq 1-\sum_{\ell=1}^{k+1}\min_{j\in[k+1]\setminus\{\ell\}}\frac{|B_j|(|B_j|-1)}{2}\prod_{i\in[k+1]\setminus\{\ell\}}\frac{\max_{u\in \mathrm{Supp}(B_i)}d_u}{|B_i|}\\
  &\geq 1-\frac{k+1}{2}\frac{d_1^k}{|B_{k-1}|^{k-2}},
    \end{align*}
    where the second inequality follows from the inequalities $d_1\geq\hdots\geq d_n$ and $|B_1|\geq\hdots\geq |B_{k+1}|$. We obtain a lower bound on $|B_{k-1}|$ by observing that the first $k-2$ boxes all contain at most $\sigma/k$ elements each, so that there are at least $\sigma-(k-2)\sigma/k$ vertices left to distribute between $B_{k-1},B_k$ and $B_{k+1}$. Since $B_{k-1}$ contains the most elements among these three, it holds that
    \begin{align}
        |B_{k-1}|\geq \frac{1}{3}\bigg(\sigma-(k-2)\frac{\sigma}{k}\bigg)=\frac{2\sigma}{3k}.\label{eq:lower_bound_B_k-1}
    \end{align}
    Inserting this into the formula above yields
    \begin{align*}
        \p(H\text{ is simple})\geq 1-\frac{k+1}{2}\bigg(\frac{3k}{2}\bigg)^{k-2}\frac{d_1^k}{\sigma^{k-2}}
    \end{align*}
    and finishes the proof.
\end{proof}
\begin{proof}[Proof of Theorem~\ref{thm:many_small_degrees}]
We show that Algorithm~\ref{algorithm4} has the required properties. By Proposition~\ref{lem:runtime} it has a polynomial runtime. Due to Theorem~\ref{thm:main} it suffices to show that $(B_1,\dots,B_{k+1})\in\mathrm{Allocation}_{k+1}(\pi)$ in order to immediately obtain all remaining claims aside from the bound on the probability. The non-trivial condition to check is $|B_1|\leq \sigma/k$ (after relabelling in the third line of Algorithm~\ref{algorithm4}). Suppose that $B_1$ is generated in the first line of the code. Then Lemma~\ref{lem:allocation} implies that
    \begin{align*}
        |B_1|\leq \max\bigg(d_m,\frac{\sum_{i=m}^nd_i}{4}+d_{m+4}\bigg).
    \end{align*}

    By assumption we have $d_m\leq d_1\leq \sigma/k$. Since $m$ is maximal with the property
    \begin{align*}
        \sum_{i=m}^nd_i\geq \frac{4\sigma}{k+1},
    \end{align*}
    we deduce that
    \begin{align*}
        \quad\sum_{i=m+1}^nd_i< \frac{4\sigma}{k+1}.
    \end{align*}
    As $d_{m+4}\leq d_m$, the assumed bound on $d_m$ allows us to compute
    \begin{align*}
        \frac{\sum_{i=m}^nd_i}{4}+d_{m+4}<\frac{\sigma}{k+1}+\frac{5d_m}{4}\leq \frac{\sigma}{k+1}+\frac{\sigma}{k(k+1)}=\frac{\sigma}{k}. 
    \end{align*}
    Now suppose that $B_1$ is generated in the second line of the code of Algorithm~\ref{algorithm4}. Then Lemma~\ref{lem:allocation} provides
    \begin{align*}
        |B_1|\leq \max\bigg(d_1,\frac{\sum_{i=1}^{m-1}d_i}{k-3}+d_{k-2}\bigg).
    \end{align*}
  We have $d_1\leq\sigma/k$ whereas the definition of $m$, and the assumed bound on $d_{k-2}$ give us
    \begin{align*}
        \frac{\sum_{i=1}^{m-1}d_i}{k-3}+d_{k-2}&=\frac{\sigma-\sum_{i=m}^nd_i}{k-3}+d_{k-2}\leq \frac{\sigma-\frac{4\sigma}{k+1}}{k-3}+\frac{\sigma}{k(k+1)}\\
        &=\frac{\sigma}{k+1}+\frac{\sigma}{k(k+1)}=\frac{\sigma}{k}.
    \end{align*}
    In both cases we obtain $|B_1|\leq\sigma/k$, which allows to apply Theorem~\ref{thm:main}. So, the algorithm terminates, and provides a $k$-hypergraph with the desired degree sequence. Moreover, the probability of the $k$-hypergraph being simple satisfies
    \begin{align*}
		&\p(H\text{ is simple})\nonumber\\
  &\geq 1-\sum_{\ell=1}^{k+1}\min_{j\in[k+1]\setminus\{\ell\}}\frac{|B_j|(|B_j|-1)}{2}\prod_{i\in[k+1]\setminus\{\ell\}}\frac{\max_{u\in \mathrm{Supp}(B_i)}d_u}{|B_i|}.
	\end{align*}
 Let $a,b,c,d\in[k+1]$ be the indices of the boxes that were filled with the copies of the vertices $m,\dots,n$. In the rightmost product sign above we omit one factor $\ell\in[k+1]$ so that we have at least three elements of $a,b,c,d$ left in that product, all of which are not equal to $\ell$. We denote these three elements by $x_{\ell},y_{\ell}$ and $z_{\ell}$. As $\max_{u\in\mathrm{Supp}(B_i)}d_u\leq |B_i|$ for all $i\in[k+1]$, we derive 
 \begin{align*}
     \p(H\text{ is simple})&\geq 1-\sum_{\ell=1}^{k+1}\min_{j\in[k+1]\setminus\{\ell\}}\frac{|B_j|(|B_j|-1)}{2}\frac{d_m^3}{|B_{x_{\ell}}||B_{y_{\ell}}||B_{z_{\ell}}|}.
 \end{align*}
 Since $x_\ell,y_\ell,z_\ell\neq \ell$, we deduce that |$B_{x_\ell}|,|B_{y_\ell}|$ and $|B_{z_\ell}|$ are all at least as large as $\min_{j\in[k+1]\setminus\{\ell\}}|B_j|$. As they are also pairwise distinct, their maximum is larger than or equal to $|B_{k-1}|$ as $|B_1|\geq \hdots\geq|B_{k+1}|$. We obtain
 \begin{align*}
     \p(H\text{ is simple})&\geq 1-\sum_{\ell=1}^{k+1}\frac{1}{2}\frac{d_m^3}{|B_{k-1}|}\geq1-\frac{3k(k+1)}{4}\frac{d_m^3}{\sigma},
 \end{align*}where we inserted $3k|B_{k-1}|\geq2\sigma $ from~\eqref{eq:lower_bound_B_k-1} above as lower bound for $|B_{k-1}|$ in the last inequality. Note that \eqref{eq:lower_bound_B_k-1} does not depend on how  the balls were allocated to the boxes and is also applicable here. This finishes the proof.
\end{proof}

\end{document}